\newtheorem{theorem}{Theorem}
\newtheorem{proposition}[theorem]{Proposition}
\newtheorem{lemma}[theorem]{Lemma}
\newtheorem{corollary}[theorem]{Corollary}
\theoremstyle{definition}
\newtheorem{definition}[theorem]{Definition}
\newtheorem{remark}[theorem]{Remark}
\newtheorem{exempl}[theorem]{Example}
\theoremstyle{theorem}
\newtheorem*{mainResult*}{Main Result}
\newtheorem*{remark*}{Remark}
\theoremstyle{definition}
\newtheorem*{definition*}{Definition}
\DeclareMathAlphabet{\mathpzc}{OT1}{pzc}{m}{it}
\begin{document}

\title{Synchronization in abstract mean field models}

\author{W. Oukil, \\ \quad \\
\small\text{Laboratory of Dynamical Systems (LSD), Faculty of Mathematics,}\\
\small\text{ University of Sciences and Technology Houari Boumediene,}\\
\small\text{ BP 32 El Alia 16111, Bab Ezzouar, Algiers, Algeria.}}

\date{\today}

\maketitle
\begin{abstract}
We show in this paper a  sufficient condition for the existence of solution, the synchronized and the periodic locked state in abstract mean field models.
\end{abstract}
\begin{keywords} Coupled oscillators, mean field, synchronization, desynchronization, periodic orbit.
\end{keywords}




\section{Introduction}\label{Chap2hypothesisnonlinearsynch}
This article is a generalization of the result obtained in \cite{OukilKessiThieullen}. The  class of abstract mean field systems that we study in this article is given by the  next systems. The {\it{periodic not-perturbed }} system
\begin{equation}\label{Chap1NotPerturbedSystem} \tag{PNP}
\dot{x}_i=F(X,x_i),\quad i=1,..,N,\quad t\geq t_0,
\end{equation}
and the {\it{perturbed}} system
\begin{equation}\label{SystemGeneralMeanField} \tag{P}
\dot{x}_i=F(X,x_i)+H_i(X),\quad i=1,..,N,\quad t \geq t_0,
\end{equation}
where $N\geq2$ and $X=(x_1,\ldots,x_N)$ is the state of the system. $F : \mathbb{R}^N\times\mathbb{R}\to\mathbb{R}$ and $H=(H_1,\ldots,H_N): \mathbb{R}^N\to\mathbb{R}^N$ are a  $C^1$ functions. The function $H$ is a  perturbation of the system \eqref{Chap1NotPerturbedSystem}. We note $\Phi^t$ the flow of the system \eqref{SystemGeneralMeanField} (in particular of the system \eqref{Chap1NotPerturbedSystem}). We have take the two  systems because the results seem not trivial for the periodic  not-perturbed system.
\subsection{Notations and definitions}
 In this section, we introduce some notations and definitions. For $q,p\in \mathbb{N}^*$ let $G$ be a function from $\mathbb{R}^q$ to $\mathbb{R}^p$. Put $G=(G_1,\ldots,G_p)$ we consider the quasi-norm on the space of continues functions from $\mathbb{R}^q$ to $\mathbb{R}^p$ defined by the next quantity
\[
||G||_{B}=\sup_{Y\in B}\max_{1\le i \le p}|G_i(Y)|,
\]
\[
\text{where}\ B=\{Y=(y_1,\ldots,y_q)\in\mathbb{R}^q:\ \max|y_i-y_j|\le1\}.
\] 
This quasi-norm is a norm on the space of continues functions from $B$ to $\mathbb{R}^p$. We note $d^iG$, $i=1,2,\ldots$, the $i^{\text{th}}$ differential of $G$.  We define
\[
||dG||_{B}=\max_{\substack{1\le i \le p\\ 1 \le j \le q}}||\partial_j G_i(Y)||_B,\quad||d^2G||_{B}=\max_{\substack{1\le i \le p\\ 1 \le j,k \le q}}||\partial_k\partial_j G_i(Y)||_B.
\]
Let $G:\mathbb{R}^N\times\mathbb{R}\to\mathbb{R}$, $Y=(y_1,\ldots,y_N)\in \mathbb{R}^N$ and $z\in \mathbb{R}$, We note
\begin{equation*}
\partial_i G(Y,z):=\left\{
\begin{aligned}
&\frac{\partial}{\partial z}G(Y,z) \quad\text{if}\quad i = N+1,\\ 
& \frac{\partial}{\partial{y_i}}G(Y,z)\quad\text{if}\quad i\in\{1,\ldots,N\}.
\end{aligned}
\right.
\end{equation*}
A function $G :\mathbb{R}^q\to \mathbb{R}^p$ is called $\mathbb{1}$-{\it{periodic}} in the sense of the following definition
\label{1periodicdefintion}
\begin{definition}\label{diagonalperiodicity}[$\mathbb{1}$-periodic function] Let $G :\mathbb{R}^q\to \mathbb{R}^p$ be a function and note $\mathbb{1}:=(1,\ldots,1)\in \mathbb{R}^q$. The function $G$ is called $\mathbb{1}$-{\it{periodic}} if
\[{G}(Y+\mathbb{1})={G}(Y),\quad\forall Y\in\mathbb{R}^q.\]
\end{definition}
Remark that the previous definition do not imply  that the function $G$ is periodic relative to each variable.
Now we define a positive $\Phi^t$-invariant set,
\begin{definition}
Suppose that the flow $\Phi^{t}$ of system \eqref{SystemGeneralMeanField} exists for every $t\geq t_{0}$. We say that a open set  $C \subset \mathbb{R}^N$ is a positive $\Phi^t$-invariant if $\Phi^{t}(C) \subset C$ for all $t \geq t_{0}$.
\end{definition}
Synchronization and locking may have several meanings or definitions depending on the authors. We choose the following definitions.
\begin{definition}[Dynamical oscillator]\label{DefinitionVitessePositive}
The oscillator $x_i(t)$ of a solution $X(t)=(x_1(t),..,x_N(t))$ of system \eqref{SystemGeneralMeanField} is called {\it{dynamical}} if there exists $t_0\in \mathbb{R}$ such that
\[
\inf_{t\geq t_0}\dot{x}_i(t)>0.
\]
\end{definition}
\begin{definition}[Synchronisation]\label{Chap1DefinitionSynchronisationDispersions}
We say that the oscillators $\{x_i(t)\}_{i=1}^{N}$ are synchronized  if they are dynamical and if $\sup_{1 \leq i,j \leq N}  |x_i(t)-x_j(t)| $ is bounded from above uniformly in time $t\ge t_0$.
\end{definition}
\begin{definition}[Periodic locked solution]\label{Chap1DefinitionAccrochagePeriodique}
We say that the  oscillators $\{x_i(t)\}_{i=1}^{N}$ are periodically locked to the frequency $\rho>0$ if they are synchronized and if there exist a periodic functions $\Psi_i(t)$  such that
\[
x_i(t) = \rho t + \Psi_i(t), \quad \forall i= 1\ldots N,  \ \forall t\geq t_0.
\]
\end{definition}
\subsection{Synchronization Hypothesis $(H)$ and $(H_*)$}
The goal is to prove the existence of the synchronization stateof the syst\`eme \eqref{SystemGeneralMeanField} when $||H||_B\approx 0$. Consider the following hypotheses
\label{hypabriviatHH}
\begin{align*}
(H)&\quad \left\{
    \begin{array}{ll}
	F\ \text{is }\ C^2,\quad\text{and}\quad \max\{||F||_{B},||dF||_{B},||d^2F||_{B}\}<+\infty,\\
	F\ \text{is}\  \mathbb{1}\text{-periodic}\ \text{and}\ \min_{s\in [0,1]}F(s\mathbb{1},s)>0,	   
    \end{array}
\right.\\
(H_*)&\quad\int_{0}^{1}\frac{\partial_{N+1} F(s\mathbb{1},s)}{F(s\mathbb{1},s)}ds <0.
\end{align*}
We call the hypothesis $(H_*)$ the {\it{synchronization hypothesis}}.  The particularity of the hypothesis $(H_*)$ is the fact :  $H\approx0$ and $x_i\approx x_j (\approx x)$ implies that the system \eqref{SystemGeneralMeanField} is equivalent to
\[\frac{d}{dt}{x}_{i}\approx F(x\mathbb{1},x)\ \text{and}\ \frac{d}{dt}(x_i-x_j)\approx \partial_{N+1} F(x\mathbb{1},x)(x_i-x_j).\]
The condition $\min_{s\in [0,1]}F(s\mathbb{1},s)>0$  is a sufficient condition to get a dynamical oscillators as defined in definition \ref{DefinitionVitessePositive}.
\subsection{Main Results}\label{Chap1SectionHypotheseetmainresultsNonlineaire}
The following main result {\bf{I}} shows the existence of the solution and a synchronized state as defined in Definition  \ref{Chap1DefinitionSynchronisationDispersions}
\begin{mainResult*}[{\bf{I}}]
We consider the system \eqref{SystemGeneralMeanField}. Suppose that $F$ satisfies the hypotheses  $(H)$ and $(H_*)$ then there exists $D_* >0$ such that for all $D \in (0,D_*]$ there exists $\mathpzc{r}>0$  and a open set $C_{\mathpzc{r}}$  of the form, 
\[
C_{\mathpzc{r}} := \Big\{ X=(x_i)_{i=1}^N \in \mathbb{R}^N \,:\, \exists \nu \in \mathbb{R}, \quad\max_{i}|x_i-\nu| < \Delta_{\mathpzc{r}}(\nu) \Big\},
\]
where  $\Delta_{\mathpzc{r}} : \mathbb{R} \to (0,D]$ is a $C^1$ and $1$-periodic function, such that for every $C^1$ function   $H$  satisfying $||H||_{B}<\mathpzc{r}$ we have
\begin{enumerate}
\item {\rm{Existence of solution}}. The flow $\Phi^t$ of the system \eqref{SystemGeneralMeanField} exists for all initial condition $X\in C_{\mathpzc{r}}$ and for all  $t\geq t_{0}$.
\item {\rm{Synchronization}}. The open set $C_{\mathpzc{r}}$ is positive $\Phi^t$-invariant . Further, for every  $X\in C_{\mathpzc{r}}$ we have
\[\min_{1\le i\le N}\inf_{t\geq t_0}\frac{d}{dt}\Phi^t_i(X)>0\  \text{and}\ \ |\Phi^{t}_{i}(X)-\Phi^{t}_{j}(X)|<2D, \forall1\le i,j \le N,\ \forall t\geq t_{0}.\]
\end{enumerate}
\end{mainResult*}
The next main result {\bf{II}} shows the existence of a periodic locked solution as defined in definition \ref{Chap1DefinitionAccrochagePeriodique}
\begin{mainResult*}[{\bf{II}}]
We consider the system \eqref{SystemGeneralMeanField}. Suppose that $F$ satisfies the hypotheses  $(H)$ and $(H_*)$ then there exists $D_* >0$ such that for all $D \in (0,D_*]$ there exists $\mathpzc{r}>0$  such that for every $C^1$ and  $\mathbb{1}$-periodic function  $H$ satisfying  $||H||_{B}<\mathpzc{r}$,  there exists an open set $C_{\mathpzc{r}}$ (same in main result {\rm(I)}) and a initial condition $X_*\in C_{\mathpzc{r}}$ such that
\[
\Phi^t_i(X_*) = \rho t + \Psi_{i,X_*}(t), \quad \forall i=1,..,N, \ \forall  t\geq t_0,
\]
where $\rho>0$ and $\Psi_{i,X_*} : \mathbb{R} \to \mathbb{R}$ are a $C^1$ and $\frac{1}{\rho}$-periodic functions.
\end{mainResult*}
\begin{remark*}\ 
The result {\bf{I}} can be generalized to a function $H(t,X)$ which depend on the variable time $t$.
\end{remark*}
\subsection{Remarks and motivation}
The results can be applied to the model of coupled oscillators  as the Winfree   \cite{WinfreeModel} and the Kuramoto model \cite{kuramoto1}.

\begin{exempl}\label{Example:WinfreeKuramoto}[Winfree and Kuramoto Models]
Winfree \cite{WinfreeModel}   proposed a model  describing the synchronization of a population of organisms or {\it oscillators} that interact simultaneously.  The Winfree model is also studied in \cite{Popovych, HaParkRyoo, Quinn, Giannuzzi,Basnarkov,Louca,PazoMontbrio}. Kuramoto  model is a refined model of the Winfree model . The Kuramoto model is applied for example in the Neurosciences to study the synchronization of neurones in the brain   \cite{Cumin2007,Ermentrouttheeffects}. We call natural frequency, the frequency of each oscillator, as  if it were isolated from the others.
The explicit Winfree  \cite{AriaratnamStrogatz} and Kuramoto model are defined by the following equation respectively
\begin{equation}\label{equation:WinfreeModel2}\tag{W}
\dot{x}_{i}=\omega_i+\text{Win}(X,x_i),\quad i = 1\ldots N\quad, t \geq t_0,
 \end{equation}
\begin{equation}\label{equation:KuramotoModel}\tag{K}
\dot{x}_{i}=\omega_i+\text{Kur}(X,x_i),\quad i = 1\ldots N,\quad t \geq t_0,
 \end{equation}
where for $(\omega,\kappa)\in \mathbb{R}^2_+$, $\text{Win}(Y,z)=\omega-\kappa  \frac{1}{N}\sum_{j=1}^{N}[1+\cos(y_j)]\sin(z)$  and $\text{Kur}(Y,z)=\omega-\kappa  \frac{1}{N}\sum_{j=1}^{N}\sin(y_j-z)$ for all $Y=(y_1,\ldots,y_N) \in \mathbb{R}^N$ and $z\in \mathbb{R}$. $X(t)=(x_1(t), \ldots,x_N(t))$ is the state of the systems, and $x_i(t)$ is the  phase of the i$^{\text{th}}$-oscillator. The parameter $\kappa \geq0$ is the strong coupling; the vector $(\omega_1+\omega,\ldots,\omega_N+\omega)\in \mathbb{R}^N$  is the vector of the natural frequencies. In order to apply the main result {\bf{I}} and {\bf{II}}  we need only shows that the functions  \eqref{equation:WinfreeModel2} and \eqref{equation:KuramotoModel} satisfies the synchronization hypothesis $(H_*)$ and the hypothesis  $(H)$ as proved in the following proposition.
\begin{proposition}\label{Proposition:WinKur}
There exists an open set of parameters $(\kappa,\omega) \in \mathbb{R}^2_+$,  such that the functions $\text{Win}$ and $\text{Kur}$ of the systems \eqref{equation:WinfreeModel2} and \eqref{equation:KuramotoModel} respectively, satisfies both hypotheses $(H)$ and $(H^*)$.
\end{proposition}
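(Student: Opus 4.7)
\medskip

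\noindent\textbf{Proof plan.} The strategy is to treat the two models in parallel and to verify the ingredients of $(H)$ and $(H_*)$ by direct computation, noting that the resulting conditions on $(\kappa,\omega)$ are open. The key observation is that both $\mathrm{Win}$ and $\mathrm{Kur}$ are trigonometric polynomials in $(y_1,\dots,y_N,z)$ (after the implicit affine rescaling that turns the $2\pi$-period of the phase into the paper's unit period). Hence $C^2$-smoothness, $\mathbb{1}$-periodicity, and boundedness of $F$, $dF$, $d^2F$ on the slab $B$ (in fact on all of $\mathbb{R}^{N+1}$) hold automatically for every $(\kappa,\omega)\in\mathbb{R}_+^2$. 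This reduces the work to checking the diagonal positivity in $(H)$ and the synchronization integral $(H_*)$.

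The next step is to restrict $F$ to the diagonal $Y=s\mathbb{1}$, $z=s$. For the Kuramoto coupling the sum $\tfrac{1}{N}\sum_j\sin(y_j-z)$ vanishes identically on the diagonal, so $\mathrm{Kur}(s\mathbb{1},s)=\omega$; positivity on $[0,1]$ then reduces to $\omega>0$. For the Winfree coupling
\[
\mathrm{Win}(s\mathbb{1},s)=\omega-\kappa(1+\cos s)\sin s,
\]
so the diagonal positivity becomes the explicit open inequality $\omega>\kappa\cdot M$, with $M=\max_{s\in[0,1]}(1+\cos s)\sin s<\infty$.

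I would then handle $(H_*)$ by one further differentiation. Along the diagonal one finds $\partial_{N+1}\mathrm{Win}(s\mathbb{1},s)=-\kappa(1+\cos s)\cos s$ and $\partial_{N+1}\mathrm{Kur}(s\mathbb{1},s)=-\kappa$ (with the sign convention for the Kuramoto coupling that makes large $\kappa$ synchronizing; this is the standard convention, and is what Proposition~\ref{Proposition:WinKur} tacitly uses). In the Kuramoto case the integrand is the constant $-\kappa/\omega$, so $(H_*)$ holds as soon as $\kappa,\omega>0$. In the Winfree case, on $s\in[0,1]\subset[0,\pi/2)$ one has $\cos s>0$, hence the numerator $-\kappa(1+\cos s)\cos s$ is strictly negative and the denominator is positive by the previous step; the integrand is therefore pointwise strictly negative on $[0,1]$ and the integral is negative.

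Finally, the set of $(\kappa,\omega)\in\mathbb{R}_+^2$ satisfying $\omega>\kappa M$ (for Winfree) or $\omega>0$ (for Kuramoto) is manifestly open and non-empty, which yields the desired open parameter region. The only point requiring any care is the sign/normalization convention in $\mathrm{Kur}$: once this is fixed so that the coupling is synchronizing, the verification is a short, essentially one-line, computation for each hypothesis.
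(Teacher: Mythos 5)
Your handling of smoothness, boundedness, $\mathbb{1}$-periodicity and the whole Kuramoto case matches the paper, and you are right to flag the sign convention in $\mathrm{Kur}$ (the paper's displayed formula would give $\partial_{N+1}\mathrm{Kur}(s\mathbb{1},s)=+\kappa$; the computation in its proof tacitly uses the attractive convention). The genuine gap is in the Winfree verification. Both the minimum in $(H)$ and the integral in $(H_*)$ must be taken over a \emph{full period} of the diagonal map $s\mapsto F(s\mathbb{1},s)$, which for the Winfree coupling is $[0,2\pi]$ (equivalently $[0,1]$ after the rescaling you invoke --- but then the rescaled unit interval still sweeps out all of $[0,2\pi]$ in the original variable). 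Restricting to $s\in[0,1]\subset[0,\pi/2)$ of the \emph{unrescaled} variable, as you do, only controls the sign of the integrand on a proper sub-interval of the period and does not establish $(H_*)$; Lemma \ref{L2} needs $\int_0^1\Lambda(s)\,ds<0$ over the whole period to produce the periodic dispersion curve. Over the full period the numerator $-\kappa(1+\cos s)\cos s$ changes sign (it is positive on $(\pi/2,3\pi/2)$), so the pointwise-negativity argument fails there.

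The paper closes this by an exact-derivative identity rather than a sign argument: with $g(s)=\omega-\kappa(1+\cos s)\sin s>0$ one has $(1+\cos s)\cos s-\sin^{2}s=\tfrac{d}{ds}\bigl[(1+\cos s)\sin s\bigr]=-g'(s)/\kappa$, hence
\[
\int_{0}^{2\pi}\frac{\kappa(1+\cos s)\cos s}{g(s)}\,ds=\int_{0}^{2\pi}\frac{\kappa\sin^{2}s}{g(s)}\,ds-\int_{0}^{2\pi}\frac{g'(s)}{g(s)}\,ds=\int_{0}^{2\pi}\frac{\kappa\sin^{2}s}{g(s)}\,ds>0,
\]
the logarithmic term vanishing by $2\pi$-periodicity of $g$. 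This gives $(H_*)$ with a nonnegative (not pointwise-controlled) integrand. Correspondingly, the positivity condition should be $\omega>\kappa\max_{s\in[0,2\pi]}(1+\cos s)\sin s=\tfrac{3\sqrt3}{4}\kappa$, not a maximum over $[0,1]$. With these two corrections (full period everywhere, and the integration-by-parts step replacing your pointwise sign claim) your argument goes through; the Kuramoto half is fine as written since its integrand is the constant $-\kappa/\omega$.
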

\begin{proof}
The function $\text{Win}$ is $C^2$ and $2\pi\mathbb{1}$-periodic. Further
\[
\min_{s\in[0,2\pi]}\text{Win}(s\mathbb{1},s)>0 \iff \forall \omega>(1+\cos(\frac{\pi}{3}))\sin(\frac{\pi}{3})\kappa,\quad\forall s \in [0,2\pi].
\]
For every $\omega>(1+\cos(\frac{\pi}{3}))\sin(\frac{\pi}{3})\kappa$ we have
\begin{align*}
\int_{0}^{2\pi}\frac{\partial_{N+1} \text{Win}(s\mathbb{1},s)}{\text{Win}(s\mathbb{1},s)}ds&=-\int_{0}^{2\pi}\frac{\kappa [1+\cos(s)]\cos(s)}{\omega-\kappa(1+\cos(s))\sin(s)}ds\\
&=-\int_{0}^{2\pi}\frac{\kappa\sin^2(s)}{\omega-\kappa(1+\cos(s))\sin(s)}ds<0.
\end{align*}
Same for the Kuramoto model, we have $\text{Kur}$ is $2\pi\mathbb{1}$-periodic, and
\[
\min_{s\in[0,2\pi]}\text{Kur}(s\mathbb{1},s)>0,\quad \forall \omega>0,\quad\forall s \in [0,2\pi],
\]
For every $\omega>0$ and $\kappa >0$ we have 
\[
\int_{0}^{2\pi}\frac{\partial_{N+1} \text{Kur}(s\mathbb{1},s)}{\text{Kur}(s\mathbb{1},s)}ds=-\int_{0}^{2\pi}\frac{\kappa}{\omega}ds=-\frac{2\pi \kappa}{\omega}<0.
\]
\end{proof}
\end{exempl}
\section{Dispersion curve}\label{Chap2dispersion_curve}
\label{pagefonctdipsrref}
The strategy to prove the mains results is to use the comparison theorem of differentials equations. We assumed a priori that the distance between the oscillators is small and find some  differential equation estimation to deduce that the distance between oscillators is bounded uniformly on time.
 We call the ``upper-solution'' {\it {the dispersion curve}}. We have the following lemma
\begin{lemma}\label{L2}
Let $\eta=(\eta_1,\eta_2,\eta_3)\in\mathbb{R}_+^3/\{(0,0,0)\}$. Let $P_1(a,b)=\eta_1 a+\eta_2 b^2$ a polynomial defined for  all $(a,b)\in \mathbb{R}\times\mathbb{R}$ and let $\Lambda: \mathbb{R}\to\mathbb{R}$  a $C^1$ and $1$-periodic function satisfying
\[
\int_{0}^1{\Lambda(s)}ds <0.
\]
Then for all $(a,b)\in\mathbb{R}_+^*\times(0,\eta_3)$ the following differential equation
\begin{equation}\label{Lin-lemme}
\frac{d}{ds}z(s)=\frac{P_1(a,b)}{\eta_3-b}+\Lambda(s)z(s),
\end{equation}
admits a positive solution $C^1$ and $1$-periodic solution that we note $\Delta_{a,b}(s)$. Further, there exists $D_{\eta,\Delta} \in (0,\eta_3)$ such that for all   $D \in (0,D_{\eta,\Delta}]$ there exists $\mathpzc{r}>0$  such that the solution $\Delta_\mathpzc{r}:=\Delta_{\mathpzc{r},D}$  satisfies
\[
\max_{s\in [0,1]}\Delta_{\mathpzc{r}}(s) \le D.
\]
\end{lemma}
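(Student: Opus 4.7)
The plan is to exploit the fact that the equation is a linear first-order ODE with a $1$-periodic coefficient $\Lambda(s)$ and a constant forcing term $c := P_1(a,b)/(\eta_3 - b)$, which is strictly positive whenever $P_1(a,b) > 0$ (guaranteed once at least one of $\eta_1,\eta_2$ is positive, since $a>0$ and $b>0$). I would introduce the antiderivative $M(s):=\int_0^s \Lambda(u)\,du$ together with the integrating factor $e^{-M(s)}$ and write the general solution as
\[
z(s) = e^{M(s)}\Bigl[z(0) + c\int_0^s e^{-M(u)}\,du\Bigr].
\]
Imposing periodicity $z(1)=z(0)$ and setting $\sigma := M(1) = \int_0^1 \Lambda(u)\,du < 0$ yields
\[
z(0) = \frac{c\,e^{\sigma}\int_0^1 e^{-M(u)}\,du}{1-e^{\sigma}} > 0,
\]
since $1-e^{\sigma}>0$. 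Denote by $\Delta_{a,b}(s)$ the resulting function.

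Regularity ($C^1$) is immediate from the formula, and $1$-periodicity follows from the construction $z(1)=z(0)$ together with $\Lambda(s+1)=\Lambda(s)$. Positivity on $[0,1]$ is clear since $z(0)>0$, $e^{M(s)}>0$, and $\int_0^s e^{-M(u)}\,du\geq 0$; the inequality $\Delta_{a,b}(s)>0$ then extends to all of $\mathbb{R}$ by periodicity. Alternatively, any zero of $z$ is a strictly increasing point because $z'=c>0$ there, which rules out sign changes of a periodic solution.

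The key observation for the quantitative statement is the linear dependence $\Delta_{a,b}(s) = c\cdot\Theta(s)$, where
\[
\Theta(s):= e^{M(s)}\Bigl[\tfrac{e^{\sigma}\int_0^1 e^{-M(u)}\,du}{1-e^{\sigma}}+\int_0^s e^{-M(u)}\,du\Bigr]
\]
depends on $\Lambda$ alone. Setting $K_*:=\max_{s\in[0,1]}\Theta(s)>0$, the required bound $\max_{s}\Delta_{\mathpzc{r},D}(s)\leq D$ becomes $(\eta_1\mathpzc{r}+\eta_2 D^2)K_*\leq D(\eta_3-D)$, i.e.\ $\eta_1\mathpzc{r} K_*\leq D\bigl[\eta_3 - D(1+\eta_2 K_*)\bigr]$. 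Choosing $D_{\eta,\Delta} := \eta_3/\bigl(2(1+\eta_2 K_*)\bigr)\in(0,\eta_3)$ keeps the bracket at least $\eta_3/2$ for every $D\in(0,D_{\eta,\Delta}]$, so the right-hand side is strictly positive; one may then take any sufficiently small $\mathpzc{r}>0$ (for instance $\mathpzc{r} = D\eta_3/(2\eta_1 K_*)$ if $\eta_1>0$, or any $\mathpzc{r}>0$ if $\eta_1=0$).

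The main obstacle is really a bookkeeping issue: ensuring $c>0$ forces a mild case split on which of $\eta_1,\eta_2$ vanishes, and the same case split determines the admissible window for $\mathpzc{r}$. Once $\Delta_{a,b}$ is recognized as proportional to $c$, the quantitative estimate reduces to the two-line rearrangement above, and the nontrivial analytic ingredient is solely the sign condition $\int_0^1 \Lambda<0$, which makes the time-$1$ monodromy $e^{\sigma}$ a strict contraction and thereby guarantees both existence and uniqueness of the periodic solution.
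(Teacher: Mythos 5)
Your proof is correct and follows essentially the same route as the paper's: both construct the explicit $1$-periodic solution of the linear ODE via an integrating factor, observe that it is proportional to the forcing constant $P_1(a,b)/(\eta_3-b)$ with a proportionality factor depending only on $\Lambda$, and then choose $D_{\eta,\Lambda}$ and $\mathpzc{r}$ so that the resulting bound is at most $D$ (the paper solves the corresponding relation as an equality to get an explicit formula for $\mathpzc{r}$, whereas you settle for an inequality — a cosmetic difference). Your remark that positivity requires $P_1(a,b)>0$, i.e.\ not both $\eta_1=\eta_2=0$, is a legitimate edge case the paper glosses over, but it is irrelevant in the application where all components of $\eta$ are positive.
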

\begin{proof}
Remark that for all $(a,b)\in\mathbb{R}_+^*\times(0,\eta_3)$, the differential equation \eqref{Lin-lemme} admit a  positive $C^{2}$ and $1$-periodic solution  $\Delta_{a,b}(s)$ of the form
\begin{gather*}
\Delta_{a,b}(s)=\frac{P_1(a,b)}{\eta_3-b}\frac{\int_{s}^{1+s}\exp \Big( \int_{t}^{1+s}\Lambda(v)dv \Big) dt}{1-\exp \Big( \int_{0}^{1}\Lambda(v)dv \Big)}
\end{gather*}
Put
\[
\lambda_1 =-\int_{0}^1{\Lambda(s)}ds\quad\text{and}\quad\lambda_2=\max_{0\le s,t\le 1}\int_{t}^{1+s}\Lambda(v)dv.
\]
\[
\max_{s\in[0,1]}\Delta_{a,b}(s)  \le \frac{P_1(a,b)}{\eta_3-b} \frac{\exp(\lambda_2)}{1-\exp(-\lambda_1)}. 
\]
To get  $\max_{s\in [0,1]}\Delta_{\mathpzc{r}}(s) \le D$ it sufficient to choose $\mathpzc{r}$ and $D$ such that
\begin{align}
\label{equation:figure}
\frac{P_1(\mathpzc{r},D)}{\eta_3-D}\frac{\exp(\lambda_2)}{1-\exp(-\lambda_1)}= D,
\end{align}
which is satisfied for all  $D \in (0,D_{\eta,\Lambda}]$ such that
\[
D_{\eta,\Lambda}=\frac{\eta_3}{2}\frac{1-\exp(-\lambda_1)}{1-\exp(-\lambda_1)+\eta_2\exp(\lambda_2)}.
\]
where $\mathpzc{r}>0$  is given by the following formula
\begin{equation*}
\mathpzc{r}=\frac{D}{\eta_1}\Big{[} \eta_3\frac{1-\exp(-\lambda_1)}{\exp(\lambda_2)}-[\frac{1-\exp(-\lambda_1)}{\exp(\lambda_2)}+\eta_2]D\Big{]}.
\end{equation*}
\end{proof}
\begin{definition}\label{definition:dispersion_curve}
Let $D\in (0,D_{\eta,\Lambda}]$. We call {\it  the dispersion curve associated to $D$} the solution
\[\Delta_{\mathpzc{r}}:=\Delta_{\mathpzc{r},D}(s),\] 
of the differential equation \eqref{Lin-lemme} where $\mathpzc{r}$ is defined by 
\begin{equation}
\label{radius}\mathpzc{r}=\frac{D}{\eta_1}\Big{[} \eta_3\frac{1-\exp(-\lambda_1)}{\exp(\lambda_2)}-[\frac{1-\exp(-\lambda_1)}{\exp(\lambda_2)}+\eta_2]D\Big{]},
\end{equation}
and where
\[
\lambda_1 =-\int_{0}^1{\Lambda(s)}ds\quad\text{et}\quad\lambda_2=\max_{0\le s,t\le 1}\int_{t}^{1+s}\Lambda(v)dv.
\]
\end{definition}
\begin{definition}\label{generaldefinition:Cr}
Let $D \in (0,D_{\eta,\Lambda}]$. We call {\it { the synchronization open set associated to $D$}} and we note $C_\mathpzc{r}$ the open set on $\mathbb{R}^N$ defined by
\begin{equation}
C_{\mathpzc{r}} := \Big\{ X=(x_i)_{i=1}^N \in \mathbb{R}^N \,:\, \exists \nu_X \in \mathbb{R}, \quad\max_{i}|x_i-\nu_X| < \Delta_{\mathpzc{r}}(\nu_X) \Big\},
\end{equation}
where $\Delta_{\mathpzc{r}}$ is the dispersion curve associated to $D$.
\end{definition}
\begin{remark}\label{Chap2remark:radius}
Remark that
\[
D<D_{\eta,\Lambda}<\frac{\eta_3}{2\eta_2}\exp(-\lambda_2)\quad\text{and}\quad\mathpzc{r} < D\frac{\eta_3}{\eta_1}\exp(-\lambda_2).
\]
\end{remark}
\section{Reduction of the system $(P)$}\label{Chap2synch}
The goal of this Section is to prove that the perturbed system \eqref{SystemGeneralMeanField} in particular the periodic not-perturbed system \eqref{Chap1NotPerturbedSystem}  can be studied by using a scalar periodic differential equation such as equation \eqref{Lin-lemme} of lemma \ref{L2}. 
Define the following  new system
\label{abriviatSNP}
\begin{definition}\label{SPN}
Let $X\in \mathbb{R}^N$ and let $\mu_0 \in \mathbb{R}$,  we call the (NPS) {\it{system associated to}} $\Phi^t(X)$  the not-perturbed following system
\begin{align}\label{Chap2position}\tag{NPS}
\dot{\mu}_X=F(\Phi^t(X),\mu_X),\quad t\in I_X,
\end{align}
where $I_X=[t_{0},T_X)$ is the maximal interval of the solution $X(t):=\Phi^t(X)$ of the system \eqref{SystemGeneralMeanField}  of initial condition $\phi^{t_{0}}(X)=X$. We say that $\mu_X(t)$ is the solution of the system \eqref{Chap2position} associated to $\Phi^t(X)$  of initial condition $\mu_X(t_{0})\in\mathbb{R}$.
\end{definition}
We note 
\begin{align}
\label{L}L&:=||F||_{B}+||dF||_{B}+||d^2 F||_{B},\quad \text{and}\quad \alpha&:=\min_{s\in [0,1]}F(s\mathbb{1},s).
\end{align}
Let $X\in \mathbb{R}^N$ and let $\mu_X(t)$  be the solution of the system \eqref{Chap2position} associated to $\Phi^t(X)$  of initial condition $\mu_0\in\mathbb{R}$.   We also note  $X:=\Phi^t(X)$ and $\mu_X:=\mu_X(t)$ without loss of generality. We consider the following quantities
\begin{align*}
\delta_{i,1}(X) := x_i-\mu_X,\quad\delta_{i,2}(X) := \mu_X-x_i,\\
\quad\text{and}\quad \delta(X) := \max_{1\le i\le N}|\delta_{i,1}(X)|= \max_{1\le i\le N}|\delta_{i,2}(X)|.
\end{align*}
We have the next lemma
\begin{proposition}\label{Chap2prop2}
We consider the system \eqref{SystemGeneralMeanField}. Suppose that the function $F$ satisfies the hypothesis $(H)$ and suppose that $\Phi^t(X)$  is defined for all $t\in [t_1,t_2]$. Let $D \in (0,\frac{\alpha}{L})$ and suppose that  $\delta(X)< D$ for all $t \in [t_1,t_2]$,  then
\[
\dot{\mu}_X>-LD+\alpha>0,\quad\forall t \in [t_1,t_2].
\]
In particular, $t\to \mu_X(t)$ is a diffeomorphism  from $[t_1,t_2]$ to $[\mu_X(t_1),\mu_X(t_2)]$.
\end{proposition}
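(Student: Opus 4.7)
The plan is to derive a lower bound on $\dot{\mu}_X = F(\Phi^t(X),\mu_X)$ by comparing the value of $F$ at the actual state $(X,\mu_X)$ with its value at the diagonal point $(\mu_X\mathbb{1},\mu_X)$. Hypothesis $(H)$ combined with the $\mathbb{1}$-periodicity of $F$ immediately ensures that this diagonal value is at least $\alpha$: writing $n=\lfloor\mu_X\rfloor$ and applying periodicity, one has $F(\mu_X\mathbb{1},\mu_X)=F((\mu_X-n)\mathbb{1},\mu_X-n)\ge\alpha$, since $\mu_X-n\in[0,1]$. Hence the whole proposition reduces to controlling the deviation $F(X,\mu_X)-F(\mu_X\mathbb{1},\mu_X)$ by a quantity of the form $LD$.

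For that, I would apply the fundamental theorem of calculus along the straight segment joining $\mu_X\mathbb{1}$ to $X$, keeping the last coordinate frozen at $\mu_X$:
\[
F(X,\mu_X)-F(\mu_X\mathbb{1},\mu_X)=\int_{0}^{1}\sum_{i=1}^{N}(x_i-\mu_X)\,\partial_i F\bigl(\mu_X\mathbb{1}+\tau(X-\mu_X\mathbb{1}),\,\mu_X\bigr)\,d\tau.
\]
Because $|x_i-\mu_X|<D$ for every $i$, each point of the segment sits within distance $D$ of $\mu_X\mathbb{1}$ componentwise. Translating by $-n\mathbb{1}$ and invoking $\mathbb{1}$-periodicity once more, the argument of every $\partial_i F$ lies in $B$, thanks to the smallness $D<\alpha/L\le 1$ (which holds because $\alpha\le\|F\|_B\le L$). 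Consequently $|\partial_i F|\le\|dF\|_B\le L$ along the segment, the integrand is bounded accordingly, and collecting the pieces yields
\[
\dot{\mu}_X=F(X,\mu_X)\;\ge\;F(\mu_X\mathbb{1},\mu_X)-LD\;\ge\;\alpha-LD\;>\;0,
\]
where the final strict inequality is exactly the hypothesis $D<\alpha/L$.

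The diffeomorphism statement then comes for free: $\mu_X$ is of class $C^1$ as the solution of an ODE with $C^1$ right-hand side, and the strict positivity of $\dot{\mu}_X$ on $[t_1,t_2]$ forces $\mu_X$ to be a strictly increasing $C^1$ bijection onto $[\mu_X(t_1),\mu_X(t_2)]$, with $C^1$ inverse by the inverse function theorem. The step I expect to require the most care is the bookkeeping in the second paragraph: one has to ensure that the mean-value segment, after periodic translation, genuinely lies in the set $B$ so that the norm $\|dF\|_B$ actually bounds the relevant partial derivatives; everything else is a routine application of the mean value theorem together with hypothesis $(H)$.
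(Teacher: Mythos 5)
Your argument is exactly the paper's: decompose $\dot{\mu}_X=F(X,\mu_X)=[F(X,\mu_X)-F(\mu_X\mathbb{1},\mu_X)]+F(\mu_X\mathbb{1},\mu_X)$, bound the bracket by $\|dF\|_B\,D\le LD$ via the mean value theorem, and use $F(\mu_X\mathbb{1},\mu_X)\ge\alpha$ together with $D<\alpha/L$. The only difference is that you spell out the periodic translation needed to place the mean-value segment inside $B$, a step the paper leaves implicit; the proof is correct and essentially identical.
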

\begin{proof}
The strategy is to use the Mean value theorem . Since $\delta(X)<D$  we get, $|F(X,\mu_X)-F(\mu_X\mathbb{1},\mu_X)|\le||dF||_{B}D < L D$.  Hence
\[\dot{\mu}_X=F(X,\mu_X)=[F(X,\mu_X)-F(\mu_X\mathbb{1},\mu_X)]+F(\mu_X\mathbb{1},\mu_X)>- LD+ \alpha. \]
Thanks to  hypothesis $0<D<\frac{\alpha}{L}$ to get $\dot{\mu}_X(t)>-LD+\alpha>0$  for all $t\in[t_1,t_2]$.
\end{proof}
\begin{proposition}\label{prop1}
We consider the system \eqref{SystemGeneralMeanField}. Suuppose that $F$ satisfies the hypothesis $(H)$ and suppose that $\Phi^t(X)$ is defined for all $t\in [t_1,t_2]$. Let $D \in (0,\frac{\alpha}{L})$ and  $\mathpzc{r}>0$. Suppose that
\[||H||_{B}<\mathpzc{r},\quad\text{and}\quad\delta(X)< D,\quad \forall t\in[t_1,t_2].\]
Then for all $1\le i \le N$, $k\in \{1,2\}$ and $s\in[\mu_X(t_1),\mu_X(t_2)]$ we have
\begin{equation}
\frac{d}{ds}\delta_{i,k}^*(s)<\frac{1}{\alpha}\frac{\alpha\mathpzc{r}+LD^2(L+2\alpha)}{\alpha-LD}+\frac{\partial F_{N+1}(s\mathbb{1},s)}{F(s\mathbb{1},s)}\delta_{i,k}^*(s),
\end{equation}
where\\
$\delta_{i,k}^*(s):=\delta_{i,k}(X(\mu_X^{-1}(s)))$ and $X(\mu_X^{-1}(s))=(x_1(\mu_X^{-1}(s)),\ldots,x_N(\mu_X^{-1}(s)))$.
\end{proposition}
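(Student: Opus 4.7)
The plan is to differentiate $\delta_{i,1}$ along the flow, perform the change of variable $s=\mu_X(t)$ to pass from a time equation to a scalar equation indexed by the ``diagonal position'' $\mu_X$, and then compare the resulting coefficient in front of $\delta_{i,1}^*$ with the ideal logarithmic-type coefficient $g(s):=\partial_{N+1}F(s\mathbb{1},s)/F(s\mathbb{1},s)$ that appears in the dispersion equation \eqref{Lin-lemme}. Proposition \ref{Chap2prop2} supplies $\dot\mu_X\ge \alpha-LD>0$ on $[t_1,t_2]$, so the substitution $s=\mu_X(t)$ is legitimate and the chain rule gives $\frac{d}{ds}\delta_{i,k}^*(s)=\dot\delta_{i,k}/\dot\mu_X$. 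Since $\dot\delta_{i,1}=F(X,x_i)-F(X,\mu_X)+H_i(X)$, the mean value theorem applied to $F(X,\cdot)$ yields $\xi_i$ between $x_i$ and $\mu_X$ with $F(X,x_i)-F(X,\mu_X)=\partial_{N+1}F(X,\xi_i)\,\delta_{i,1}$.

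The key algebraic step is to add and subtract $g(\mu_X)\delta_{i,1}^*$ and split the coefficient difference as
\[
\frac{\partial_{N+1}F(X,\xi_i)}{F(X,\mu_X)}-\frac{\partial_{N+1}F(\mu_X\mathbb{1},\mu_X)}{F(\mu_X\mathbb{1},\mu_X)}=(I)+(II),
\]
where
\[
(I)=\frac{\partial_{N+1}F(X,\xi_i)-\partial_{N+1}F(\mu_X\mathbb{1},\mu_X)}{F(X,\mu_X)},\quad (II)=\frac{\partial_{N+1}F(\mu_X\mathbb{1},\mu_X)}{F(\mu_X\mathbb{1},\mu_X)}\cdot\frac{F(\mu_X\mathbb{1},\mu_X)-F(X,\mu_X)}{F(X,\mu_X)}.
\]
Using $\|d^2F\|_B\le L$ and the fact that moving from $(X,\xi_i)$ to $(\mu_X\mathbb{1},\mu_X)$ has amplitude $\le 2D$, one bounds $|(I)|\le 2LD/(\alpha-LD)$; using $\|dF\|_B\le L$ together with $F(\mu_X\mathbb{1},\mu_X)\ge\alpha$ one gets $|(II)|\le (L/\alpha)\cdot LD/(\alpha-LD)$. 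Multiplying by $|\delta_{i,1}^*|\le D$, adding the direct perturbation term $|H_i(X)/F(X,\mu_X)|\le \mathpzc{r}/(\alpha-LD)$, and collecting over the common denominator $\alpha(\alpha-LD)$ produces exactly
\[
\frac{d}{ds}\delta_{i,1}^*(s)-g(\mu_X)\,\delta_{i,1}^*(s)<\frac{1}{\alpha}\cdot\frac{\alpha\mathpzc{r}+LD^2(L+2\alpha)}{\alpha-LD},
\]
which is the target inequality for $k=1$. The case $k=2$ is handled by using $\delta_{i,2}=-\delta_{i,1}$, $\frac{d}{ds}\delta_{i,2}^*=-\frac{d}{ds}\delta_{i,1}^*$ and $g(s)\delta_{i,2}^*=-g(s)\delta_{i,1}^*$, so the bound on $(I)+(II)$ and on $H_i(X)/F(X,\mu_X)$ applies symmetrically (these are absolute-value estimates).

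The main obstacle is not any analytic difficulty but the bookkeeping: one must keep the splitting $(I)+(II)$ separate and \emph{not} combine them over a common denominator before estimating, for otherwise the natural estimate yields a cruder constant of the form $3L^2D^2/[\alpha(\alpha-LD)]$ instead of the sharper $LD^2(L+2\alpha)/[\alpha(\alpha-LD)]$; the factor $\alpha$ in one of the summands must survive so that the final numerator takes the exact shape $P_1(\mathpzc{r},D)=\eta_1\mathpzc{r}+\eta_2 D^2$ with $\eta_1=\alpha$ and $\eta_2=L(L+2\alpha)$ required to invoke Lemma \ref{L2} with $\Lambda(s)=g(s)$, $\eta_3=\alpha$.
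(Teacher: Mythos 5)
Your argument is correct and lands on exactly the stated constant, but it organizes the estimate differently from the paper, so a comparison is worth recording. The paper works first in the time variable: it expands $F(X,x_i)-F(X,\mu_X)$ by a \emph{second-order} Taylor formula at $\mu_X$ (remainder $\tfrac12\partial_{N+1}^2F(X,c_i)\delta_{i,1}^2<LD^2$), replaces $\partial_{N+1}F(X,\mu_X)$ by $\partial_{N+1}F(\mu_X\mathbb{1},\mu_X)$ at the cost of another $LD^2$, and only then changes variables, absorbing the discrepancy between $\dot\mu_X=F(X,\mu_X)$ and $F(s\mathbb{1},s)$ into a multiplicative factor $1+\theta(s)$ with $|\theta|<LD/(\alpha-LD)$, which contributes the final $L^2D^2/\alpha$. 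You instead change variables immediately, use the first-order mean value form $\partial_{N+1}F(X,\xi_i)\delta_{i,1}$, and split the difference of ratios into $(I)+(II)$; your $(I)$ (displacement $2D$ from $(X,\xi_i)$ to $(\mu_X\mathbb{1},\mu_X)$, costing $2LD^2$ after multiplying by $|\delta_{i,1}^*|<D$) bundles together the paper's two $LD^2$ errors, and your $(II)$ is precisely the paper's $\theta(s)$ term. The error budgets therefore coincide term by term, $2LD^2+L^2D^2/\alpha$ over $\alpha-LD$ plus $\mathpzc{r}/(\alpha-LD)$, and your one-line reduction of the case $k=2$ by sign symmetry is cleaner than the paper's separate expansion. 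Both arguments share the paper's convention that a displacement of sup-norm $D$ costs $\Vert dG\Vert_B\,D$ under the mean value theorem (strictly one would pick up a dimensional factor from summing over coordinates), so you are not introducing any new gap there.

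One small correction to your closing remark: the identification of the parameters of Lemma \ref{L2} is not $\eta_1=\alpha$, $\eta_2=L(L+2\alpha)$, $\eta_3=\alpha$. To write $\frac{1}{\alpha}\frac{\alpha\mathpzc{r}+LD^2(L+2\alpha)}{\alpha-LD}$ in the form $\frac{P_1(\mathpzc{r},D)}{\eta_3-D}$ one must take $\eta_3=\alpha/L$ (so that $\eta_3-D=(\alpha-LD)/L$), whence $\eta_1=1/L$ and $\eta_2=2+L/\alpha$, exactly as in the proof of Proposition \ref{proposition:synchintervalemaximal}. This does not affect the validity of your proof of the present proposition, only the bookkeeping needed later to invoke Lemma \ref{L2}.
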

\begin{proof}
The strategy is to use several times the Taylor formula. Let $D\in (0,\frac{\alpha}{L})$ and suppose that  $\delta(X)< D$ for all $t\in[t_1,t_2]$. Use the  Taylor formula, there exists $c_i \in [x_i,\mu_X]$  such that for all  $1 \le i \le N$
\begin{align*}
F(X,x_i)-F(X,\mu_X)&=\partial_{N+1} F(X,\mu_X)\delta_{i,1}+\frac{1}{2}\partial_{N+1}[\partial_{N+1}F(X,c_i)]\delta_{i,1}^{2}\\
&<\partial_{N+1} F(X,\mu_X)\delta_{i,1}+\frac{1}{2}||d\partial_{N+1}F||_{B}D^2\\
&<\partial_{N+1} F(X,\mu_X)\delta_{i,1}+LD^2.
\end{align*}
For $ k= 2$ we also obtain
\begin{align*}
F(X,\mu_X)-F(X,x_i)&=-\partial_{N+1} F(X,\mu_X)\delta_{i,1}-\frac{1}{2}\partial_{N+1}[\partial_{N+1}F(X,c_i)]\delta_{i,1}^{2}\\
&=\partial_{N+1} F(X,\mu_X)\delta_{i,2}-\frac{1}{2}\partial_{N+1}[\partial_{N+1}F(X,c_i)]\delta_{i,2}^{2}\\
&<\partial_{N+1} F(X,\mu_X)\delta_{i,2}+LD^2.
\end{align*}
We have $||H||_B<\mathpzc{r}$, use equations \eqref{SystemGeneralMeanField} and \eqref{Chap2position} we obtain for all $1 \le i \le N$ and $k \in \{1,2\}$ 
\begin{align}\label{Taylor:1}
\frac{d}{dt}\delta_{i,k} ={H}_i(X)+[F(X,x_i)-F(X,\mu_X)]<\mathpzc{r}+ L D^2+\partial_{N+1}F(X,\mu_X)\delta_{i,k}.
\end{align}
Use again the Taylor formula to get
\begin{align*}
\partial_{N+1}F(X,\mu_X)\delta_{i,k}&=[\partial_{N+1}F(X,\mu_X)-\partial_{N+1}F(\mu_X\mathbb{1},\mu_X)+\partial_{N+1}F(\mu_X\mathbb{1},\mu_X)]\delta_{i,k}\\
&<||d\partial_{N+1}F||_{B}D|\delta_{i,j}|+\partial_{N+1}F(\mu_X\mathbb{1},\mu_X)\delta_{i,k}\\
&< L D^2+\partial_{N+1}F(\mu_X\mathbb{1},\mu_X)\delta_{i,k}.
\end{align*}
Equation \eqref{Taylor:1}  implies that for all $1 \le i \le N$ and $k \in \{1,2\}$
\[
\frac{d}{dt}\delta_{i,k} <\mathpzc{r}+ 2 L D^2+\partial_{N+1}F(\mu_X\mathbb{1},\mu_X)\delta_{i,k}.
\]
Thanks to proposition \ref{Chap2prop2}, $\dot{\mu}_X>\alpha-LD$. We consider the change of variable : $t \to s := \mu_X(t)$ for $t\in [t_1,t_2]$. Put $\delta_{i,k}^*(s):=\delta_{i,k}(X(\mu_X^{-1}(s)))$ and $X(\mu_X^{-1}(s))=(x_1(\mu_X^{-1}(s)),\ldots,x_N(\mu_X^{-1}(s)))$. We deduce that for all $s \in [\mu_X(t_1),\mu_X(t_2)]$ 
\begin{align*}
\frac{d}{dt}\delta_{i,k}(X) &= \frac{d}{ds}\delta_{i,k}^{*}(s)\frac{d}{dt}{\mu_X}(t)<\mathpzc{r}+2L D^2+\partial_{N+1}F(s\mathbb{1},s)\delta_{i,k}^{*}(s)\\
\frac{d}{ds}\delta_{i,k}^{*}(s)&=\frac{\mathpzc{r}+2LD^2}{\dot{\mu}_X}+\frac{\partial_{N+1}F(s\mathbb{1},s)}{\dot{\mu}_X}\delta_{i,k}^{*}(s)<\frac{\mathpzc{r}+2LD^2}{\alpha-L D}+\frac{\partial_{N+1}F(s\mathbb{1},s)}{\dot{\mu}_X}\delta_{i,k}^{*}(s)\\
&=\frac{\mathpzc{r}+2L D^2}{\alpha-L D}+\frac{\partial_{N+1}F(s\mathbb{1},s)}{F(s\mathbb{1},s)}\frac{F(s\mathbb{1},s)}{\dot{\mu}_X}\delta_{i,k}^{*}(s).
\end{align*}
Use the Mean value theorem and the change of variable  $t\to s:=\mu_X(t)$ we get
\[
|F(\mu_X\mathbb{1},\mu_X)-\dot{\mu}_X|=|F(\mu_X\mathbb{1},\mu_X)-F(X,\mu_X)|<||d F||_{B}D<L D,
\]
which is equivalent to
\[
\frac{F(s\mathbb{1},s)}{\dot{\mu}_X}=1+\theta(s),\quad |\theta(s)|<\frac{LD}{\alpha-L D},\quad\forall s\in [\mu_X(t_1),\mu_X(t_2)].
\]
Finlay, since $|\frac{\partial_{N+1}F(s\mathbb{1},s)}{F(s\mathbb{1},s)}|<\frac{L}{\alpha}$ and since $|\delta_{i,k}(t)| \le \delta(X)<D$ for all $t\in[t_1,t_2]$ we obtain for all $s\in[\mu_X(t_1),\mu_X(t_2)]$
\begin{align*}
\frac{d}{ds}\delta_{i,k}^{*}(s) &<\frac{\mathpzc{r}+2LD^2}{\alpha-LD}+\frac{\partial_{N+1}F(s\mathbb{1},s)}{F(s\mathbb{1},s)}[1+\theta(s)]\delta_{i,k}^{*}(s)\\
&<\frac{\mathpzc{r}+2LD^2}{ \alpha-LD}+\frac{L^2}{\alpha}\frac{D^2}{\alpha-LD}+\frac{\partial_{N+1} F(s\mathbb{1},s)}{F(s\mathbb{1},s)}\delta_{i,k}^{*}(s)\\
&=\frac{1}{\alpha}\frac{\alpha(\mathpzc{r}+2LD^2)+L^2D^2}{\alpha-LD}+\frac{\partial_{N+1}F(s\mathbb{1},s)}{F(s\mathbb{1},s)}\delta_{i,k}^{*}(s).
\end{align*}
\end{proof}
We have the following proposition
 \begin{proposition}\label{proposition:synchintervalemaximal}
Let  $F$ be a  function  satisfying hypotheses $(H)$ and $(H_*)$. Then there exists $D_*\in(0,1)$ such that for all $D \in (0,D_*]$, there exists $\mathpzc{r}>0$   and  an open set $C_{\mathpzc{r}}$ (as in   definition \ref{generaldefinition:Cr}), such that for any function $H$ satisfying $||H||_{B}<\mathpzc{r}$ we have
\[
\forall X\in C_{\mathpzc{r}}\ :\ \Phi^{t}(X)\in C_{\mathpzc{r}},\quad\forall t\in I_X.
\]
\end{proposition}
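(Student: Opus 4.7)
The strategy is a bootstrap: use Proposition \ref{prop1} together with the scalar comparison principle to trap the dispersion $\delta(\Phi^t(X))$ underneath the dispersion curve $\Delta_{\mathpzc{r}}$ of Lemma \ref{L2}. Hypothesis $(H)$ makes
\[
\Lambda(s) := \frac{\partial_{N+1} F(s\mathbb{1},s)}{F(s\mathbb{1},s)}
\]
a $C^1$, $1$-periodic function, and $(H_*)$ gives $\int_0^1 \Lambda < 0$. I would fix $\eta_3 := \alpha/L$, $\eta_1 := 1/L$ and $\eta_2 := (L+2\alpha)/\alpha$, so that the forcing term in \eqref{Lin-lemme} matches exactly the explicit constant appearing in Proposition \ref{prop1}, namely $P_1(\mathpzc{r},D)/(\eta_3-D) = \frac{1}{\alpha}\frac{\alpha\mathpzc{r}+LD^2(L+2\alpha)}{\alpha-LD}$. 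Lemma \ref{L2} then yields $D_{\eta,\Lambda}\in(0,\alpha/L)$ (set $D_* := \min(D_{\eta,\Lambda},1)$), and for every $D\in(0,D_*]$ a radius $\mathpzc{r}>0$ together with a strictly positive, $C^1$, $1$-periodic dispersion curve $\Delta_{\mathpzc{r}}$ satisfying $\max_s \Delta_{\mathpzc{r}}(s)\le D$. Define $C_{\mathpzc{r}}$ by Definition \ref{generaldefinition:Cr}.

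Fix $X\in C_{\mathpzc{r}}$, pick $\nu_X\in\mathbb{R}$ witnessing $\max_i|x_i-\nu_X|<\Delta_{\mathpzc{r}}(\nu_X)$, and initialize \eqref{Chap2position} by $\mu_X(t_0):=\nu_X$, so that $\delta(X(t_0))<\Delta_{\mathpzc{r}}(\mu_X(t_0))\le D$. Set
\[
T^* \; := \; \sup\bigl\{\,t\in I_X \; : \; \delta(\Phi^s(X))<D \ \text{for all}\ s\in[t_0,t]\,\bigr\},
\]
which satisfies $T^*>t_0$ by continuity. On $[t_0,T^*)$ Proposition \ref{Chap2prop2} gives $\dot{\mu}_X>0$, so $t\mapsto s:=\mu_X(t)$ is a diffeomorphism onto $[\mu_X(t_0),\mu_X(T^*))$, and Proposition \ref{prop1} produces
\[
\frac{d}{ds}\delta_{i,k}^*(s) \; < \; \frac{P_1(\mathpzc{r},D)}{\eta_3-D} + \Lambda(s)\,\delta_{i,k}^*(s) \qquad (1\le i\le N,\ k\in\{1,2\}),
\]
while $\Delta_{\mathpzc{r}}$ satisfies the same identity with equality. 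Since $\delta_{i,k}^*(\mu_X(t_0))\le\delta(X(t_0))<\Delta_{\mathpzc{r}}(\mu_X(t_0))$, the strict scalar comparison for linear ODEs (apply $w:=\Delta_{\mathpzc{r}}-\delta_{i,k}^*$, which satisfies $w'>\Lambda w$, $w(\mu_X(t_0))>0$, hence $(e^{-\int\Lambda}w)'>0$) gives $\delta_{i,k}^*(s)<\Delta_{\mathpzc{r}}(s)$ throughout $[\mu_X(t_0),\mu_X(T^*))$; maximizing over $i,k$ and pulling back,
\[
\delta(\Phi^t(X)) \; < \; \Delta_{\mathpzc{r}}(\mu_X(t)) \; \le \; D, \qquad t\in[t_0,T^*).
\]
If $T^*<\sup I_X$, continuity would propagate this strict inequality to a neighborhood of $T^*$, contradicting maximality. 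Hence $T^*=\sup I_X$, and $\nu := \mu_X(t)$ certifies $\Phi^t(X)\in C_{\mathpzc{r}}$ for every $t\in I_X$.

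The delicate point is the strict scalar comparison, since the quantity one really wants to control, $\delta(X(t))=\max_{i,k}\delta_{i,k}^*$, is only Lipschitz. I would circumvent this by applying the classical comparison theorem to each index $(i,k)$ separately, which is legitimate because the forcing constant in Proposition \ref{prop1} is \emph{independent} of $(i,k)$, and only then taking the maximum. The residual work is purely algebraic: verifying that with the chosen $\eta_1,\eta_2,\eta_3$ the constant $D_{\eta,\Lambda}$ and the radius $\mathpzc{r}$ produced by Lemma \ref{L2} remain strictly positive on $(0,D_*]$, which just refines the upper bound on $D_*$ in terms of $\alpha$, $L$, $\lambda_1$, $\lambda_2$ and does not affect the structure of the argument.
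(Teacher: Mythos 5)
Your proof is correct and follows essentially the same route as the paper: the same choice of $\eta=(1/L,\;2+L/\alpha,\;\alpha/L)$ and $\Lambda(s)=\partial_{N+1}F(s\mathbb{1},s)/F(s\mathbb{1},s)$ feeding Lemma \ref{L2}, the same initialization $\mu_X(t_0)=\nu_X$, and the same first-exit-time bootstrap combining Propositions \ref{Chap2prop2} and \ref{prop1}. The only (harmless) difference is in how the comparison is closed: the paper argues by contradiction at the first touching time, where $\delta_{i_0,k_0}^*=\Delta_{\mathpzc{r}}$ forces $\frac{d}{ds}\delta_{i_0,k_0}^*<\frac{d}{ds}\Delta_{\mathpzc{r}}$ and hence $\delta_{i_0,k_0}^*>\Delta_{\mathpzc{r}}$ just before, whereas you run the integrating-factor comparison on $w=\Delta_{\mathpzc{r}}-\delta_{i,k}^*$ index by index, which is an equivalent and, if anything, slightly cleaner way to rule out the touching.
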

\begin{proof}\label{reftechniquepreuvesynch}
Use equation \eqref{L} and hypothesis $(H)$
\[\max\{\int_{t_{1}}^{t_{2}} \frac{\partial_{N+1}F(s\mathbb{1},s)}{F(s\mathbb{1},s)}ds :\quad 0\le t_1\le t_2\le 1 \} \le  \frac{L}{\alpha}.\]
Let $D_{\eta,\Lambda}$ the constant defined by lemma \ref{L2} such that $\eta$ et $\Lambda$ are defined by
\[
\eta=(\frac{1}{L},2+\frac{L}{\alpha},\frac{\alpha}{L}),\quad\text{and}\quad\Lambda(s)=\frac{\partial_{N+1}F(s\mathbb{1},s)}{F(s\mathbb{1},s)}.
\]
Put $D_*:=D_{\eta,\Lambda}$.  Let $D\in (0,D_*]$  and the dispersion function  $\Delta_{\mathpzc{r}}$ associated to $D$ (See definition \ref{definition:dispersion_curve}). The dispersion curve $\Delta_{\mathpzc{r}}$ is solution of the periodic scalar differential equation
\[
\frac{d}{ds}\Delta_{\mathpzc{r}}(s)=\frac{1}{\alpha}\frac{\alpha\mathpzc{r}+LD^2(L+2\alpha)}{\alpha-LD}+\frac{F_{N+1}(s\mathbb{1},s)}{F(s\mathbb{1},s)}\Delta_{\mathpzc{r}}(s),
\]
and satisfies the folioing estimation
\[
\max_{s\in [0,1]}\Delta_{\mathpzc{r}}(s) \le D.
\]
Let  $C_{\mathpzc{r}}$ be  the synchronization open set associated to $D$, as defined in definition \ref{generaldefinition:Cr}.\\
For any function $H$ satisfying   $||H||_{B}<\mathpzc{r}$, where $\mathpzc{r}$ is given by formula \eqref{radius},  let $X(t)=(x_1(t),\ldots,x_N(t)):=\Phi^t(X)$ be the solution of the system  \eqref{SystemGeneralMeanField}  of initial condition $X=(x_1,\ldots,x_N)\in C_{\mathpzc{r}}$. There exists  $\nu_X \in \mathbb{R}$ such that $\max_{1\le i \le N}|x_i-\nu_X| < \Delta_{\mathpzc{r}}(\nu_X)\le D$. Let $\mu_X(t)$ be the solution of the system \eqref{Chap2position} associated to $X(t)$  of initial condition $\mu_X(t_{0})=\nu_X$, then $\delta(X)< \Delta_{\mathpzc{r}}(\mu_X(t_0))$. Let
\[
T^* := \sup \{t \in I_X : \forall\ t_{0}< s < t, \ \max_{i}|x_i(s)-\mu_X(s)| <\Delta_{\mathpzc{r}}(\mu_X(s)) \}.
\]
By continuity we have $t_0 \neq T^*$.  The proposition is proved if we shows that $T^*=\sup \{t\in I_X\}$. By contradiction, suppose that $ T^*\in I_X$. Using the change of variable $s=\mu_X(t)$ the proposition \ref{prop1} implies that for all $s\in [\nu_X,\mu_X^*:=\mu_X(T^*)]$
\[
\frac{d}{ds}\delta_{i,k}^{*}(s)< \frac{1}{\alpha}\frac{\alpha\mathpzc{r}+LD^2(L+2\alpha)}{\alpha-LD}+\frac{F_{N+1}(s\mathbb{1},s)}{F(s\mathbb{1},s)}\delta_{i,k}^*(s), \quad \forall s \in [\mu_X,\mu_X^*].
\]
Hence there exists  $1 \le i_0\le N$ and $k\in \{1,2\}$ such that $|\delta_{i_{0},k_0}^*(\mu_X^*)|=\Delta_{\mathpzc{r}}(\mu_X^*)$.  Suppose that  $\delta_{i_{0},k_0}^*(\mu_X^*)=\Delta_{\mathpzc{r}}(\mu_X^*)$ without loss of generality. We get
\begin{align*}
\frac{d}{ds}\delta_{i_{0},k_0}^*(\mu_X^*) &<\frac{1}{\alpha}\frac{\alpha\mathpzc{r}+LD^2(L+2\alpha)}{\alpha-LD}+\frac{F_{N+1}(\mu_X^*\mathbb{1},\mu_X^*)}{F(\mu_X^*\mathbb{1},\mu_X^*)}\delta_{i_0,k_0}^*(\mu_X^*) \\
&=\frac{1}{\alpha}\frac{\alpha\mathpzc{r}+LD^2(L+2\alpha)}{\alpha-LD}+\frac{F_{N+1}(\mu_X^*\mathbb{1},\mu_X^*)}{F(\mu_X^*\mathbb{1},\mu_X^*)}\Delta_{\mathpzc{r}}(\mu_X^*)=\frac{d}{ds}\Delta_{\mathpzc{r}}(\mu_X^*) .
\end{align*}
There exists  $s< \mu_X^*$ close enough to  $\mu_X^*$  such that $\delta_{i_{0},k_0}^*(s)>\Delta_{\mathpzc{r}}(s) $ or in other words there exists $t < T^*$ close enough to  $T^*$ such that $\delta_{i_{0},k_0}(t)>\Delta_{\mathpzc{r}}(\mu_X(t))$. We have obtained a contradiction. 
\end{proof}
\section{Proof of main result {\bf{I}} : Existence of solution and the synchronized state}\label{Chap2synch2}
 \begin{theorem}\label{Chap2proposition:dispersionCurve}
Let $F$ be a function satisfying the hypotheses  $(H)$ and $(H_*)$. Then there exists  $D_*\in(0,1)$  such that for all $D \in (0,D_*]$, there exists $\mathpzc{r}>0$  and an synchronization open set  $C_{\mathpzc{r}}$ (as  defined in the definition \ref{generaldefinition:Cr}), such that for any function $H$  satisfying $||H||_{B}<\mathpzc{r}$, and for all  $X\in C_{\mathpzc{r}}$ we have  $I_X=[t_{0},+\infty[$. Further $C_{\mathpzc{r}}$  is positive  $\Phi^t$-invariant  and
\[\forall X\in C_{\mathpzc{r}},\ \exists \nu_X \in \mathbb{R}\ :\  |\Phi^{t}_{i}(X)-\mu_X(t)|<D, \forall i=1,..,N,\ \forall t\geq t_{0},\]
 where $\mu_X(t)$  is the solution of the system \eqref{Chap2position}  associated to $\Phi^t(X)$ of initial condition $\mu_X(t_{0})=\nu_X$.
\end{theorem}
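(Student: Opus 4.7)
The plan is to derive the theorem almost directly from Proposition \ref{proposition:synchintervalemaximal}, with the only extra work being to upgrade the maximal interval of existence $I_X$ to $[t_0,+\infty)$. I would take $D_*$, $\mathpzc{r}$, the dispersion curve $\Delta_{\mathpzc{r}}$, and the synchronization open set $C_{\mathpzc{r}}$ exactly as supplied by that proposition, shrinking $D_*$ slightly if needed so that $2D \le 1$ for every admissible $D$ (Remark \ref{Chap2remark:radius} already gives $D < \eta_3/(2\eta_2)\exp(-\lambda_2)$ with the specific $\eta = (1/L,\,2+L/\alpha,\,\alpha/L)$ used in the proof of Proposition \ref{proposition:synchintervalemaximal}, so this costs nothing). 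For any $H$ with $\|H\|_B < \mathpzc{r}$ and any $X\in C_{\mathpzc{r}}$, Proposition \ref{proposition:synchintervalemaximal} already yields $\Phi^t(X)\in C_{\mathpzc{r}}$ for all $t\in I_X$, which is the positive invariance assertion.

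For the tracking estimate, I would pick $\nu_X\in\mathbb{R}$ with $\max_i |x_i-\nu_X| < \Delta_{\mathpzc{r}}(\nu_X)$, and let $\mu_X(t)$ be the solution of \eqref{Chap2position} associated to $\Phi^t(X)$ with initial value $\mu_X(t_0) = \nu_X$. The $T^*$-contradiction run inside the proof of Proposition \ref{proposition:synchintervalemaximal} in fact establishes
\[
\max_i |\Phi^t_i(X)-\mu_X(t)| \;<\; \Delta_{\mathpzc{r}}(\mu_X(t)) \;\le\; D \quad\text{for every } t\in I_X,
\]
which is exactly the bound the theorem demands.

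It only remains to show $I_X = [t_0,+\infty)$. Because $\Phi^t(X)\in C_{\mathpzc{r}}$ throughout $I_X$, the diameter obeys $\max_{j,k}|x_j(t)-x_k(t)| \le 2\Delta_{\mathpzc{r}}(\mu_X(t)) \le 2D \le 1$, so $X(t)$ lies in the tube $B\subset\mathbb{R}^N$ and the augmented point $(X(t),x_i(t))$ lies in the corresponding tube in $\mathbb{R}^{N+1}$. Consequently
\[
|\dot{x}_i(t)| \;=\; |F(X(t),x_i(t)) + H_i(X(t))| \;\le\; \|F\|_B + \|H\|_B \;\le\; L + \mathpzc{r},
\]
uniformly along the trajectory. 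If $T_X := \sup I_X$ were finite, this uniform velocity bound would force each $x_i$ to extend continuously to $T_X$ with a finite limit, and the local existence theorem for ODEs applied at that limit point would contradict the maximality of $I_X$; hence $I_X = [t_0,+\infty)$, and all three conclusions of the theorem follow.

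I do not expect a genuine obstacle: Proposition \ref{proposition:synchintervalemaximal} already packages the hard analytic work into a statement about the maximal interval, and the upgrade to a global flow is the standard \emph{bounded vector field implies no blow-up} argument, applicable here precisely because the invariance $\Phi^t(X)\in C_{\mathpzc{r}}$ places the trajectory in the tube $B$ on which the quasi-norms $\|F\|_B$ and $\|H\|_B$ provide the required a priori bounds.
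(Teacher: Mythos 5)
Your proposal is correct and follows essentially the same route as the paper: invariance and the tracking bound $\max_i|\Phi^t_i(X)-\mu_X(t)|<\Delta_{\mathpzc{r}}(\mu_X(t))\le D$ are read off from Proposition \ref{proposition:synchintervalemaximal}, and global existence is obtained by the same \emph{bounded vector field implies no blow-up} contradiction, using that the invariance keeps the trajectory in the tube $B$ where $\|F\|_B$ and $\|H\|_B$ bound the velocities. (Your remark that $2D\le 1$ is needed for the quasi-norm bounds to apply is a point the paper leaves implicit, but it is automatic from Remark \ref{Chap2remark:radius} with the $\eta$ chosen in Proposition \ref{proposition:synchintervalemaximal}.)
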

\begin{proof}
Thanks to proposition   \ref{proposition:synchintervalemaximal}, it is sufficient to prove that  $I_X=[t_{0},+\infty[$.  By contradiction suppose that there exists $t_{0}<t_X<+\infty$ such that the solution $X(t)$ is defined only on $I_X=[t_{0},t_X[$.  Then $\lim_{t\to t_X}||\Phi^t(t)||=+\infty$. Proposition \ref{proposition:synchintervalemaximal} implies that
\[
|\Phi^{t}_{i}(X)-\Phi^{t}_{j}(X)|<D, \forall1\le i,j\le N,\ \forall t_X> t\geq t_{0},
\]
For all  $i=1,..,N$,
\[
\alpha-L D-\mathpzc{r}<\frac{d}{dt}{x}_i<\max_{s\in[0,1]}F(s\mathbb{1},s)+L D+\mathpzc{r},\ \forall t_X> t\geq t_{0}.
\]
Hence $||\Phi^t(X)||<+\infty$ for all $t\in [t_{0},t_X]$, in particular $\lim_{t\to t_X}||\Phi^t(t)||<+\infty$. We have obtained a contradiction. 
\end{proof}
\section{Proof of main result {\bf{II}} : Periodic locked solution}\label{Orbiteperiodiquenombrederotation}
We use the fixed point theorem to prove the existence of periodic locked state as follow
\begin{lemma}\label{ChapPoincare}
Let $F$ be a function satisfying the hypotheses $(H)$and $(H_*)$. For any $\mathbb{1}$-periodic $C^1$ function $H$  satisfying $||H||_{B}<\mathpzc{r}$  let $C_{\mathpzc{r}}$ the synchronization  $\Phi^t$-invariant  open set given by theorem  \ref{Chap2proposition:dispersionCurve}. Let the set $\Sigma$ defined by
\[
\Sigma=\{X\in\mathbb{R}^N,\ \max_{i}|x_i|<\Delta_{\mathpzc{r}}(0)\}\subset C_{\mathpzc{r}}.
\]
Then there exists a  $C^1$ function $P : \Sigma \to \Sigma$ (the Poincar\'e map)  and a $C^1$ function  $\theta : \Sigma \to \mathbb{R}^+$ (the return time map) such that
\begin{gather*}
\Phi^{t_0+\theta(X)}(X) = P(X) + \mathbb{1}, \quad \mathbb{1}=(1, \cdots, 1) \in \mathbb{R}^N,\\
\frac{1}{L} < \theta(X) < \frac{2}{\alpha}.
\end{gather*}
\end{lemma}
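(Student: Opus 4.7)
The plan is to parameterize the Poincar\'e return by the scalar monotone observable $\mu_X(t)$ coming from the \eqref{Chap2position} system: since $\mu_X$ increases strictly and the synchronization envelope $\Delta_{\mathpzc{r}}$ is $1$-periodic, the flow time required for $\mu_X$ to traverse exactly one period sends $\Sigma$ back to itself modulo the diagonal shift $\mathbb{1}$.

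Concretely, fix $X\in\Sigma$. Then $\max_{i}|x_i|<\Delta_{\mathpzc{r}}(0)$ and hence $X\in C_{\mathpzc{r}}$ with witness $\nu_X=0$, so Theorem \ref{Chap2proposition:dispersionCurve} guarantees that $\Phi^t(X)$ is defined for all $t\geq t_0$ and remains in $C_{\mathpzc{r}}$. Let $\mu_X(t)$ be the solution of \eqref{Chap2position} associated to $\Phi^t(X)$ with initial condition $\mu_X(t_0)=0$. Proposition \ref{Chap2prop2} yields $\alpha-LD<\dot{\mu}_X\leq||F||_{B}$, so $\mu_X:[t_0,\infty)\to[0,\infty)$ is a $C^1$-diffeomorphism, and one may set
\[
\theta(X):=\mu_X^{-1}(1)-t_0.
\]
The two-sided bound on $\dot{\mu}_X$ immediately gives $1/||F||_{B}\leq\theta(X)\leq 1/(\alpha-LD)$; since hypothesis $(H_*)$ forces $||dF||_{B}>0$, one has the strict inequality $||F||_{B}<L$, and by further shrinking $D_*$ so that $LD<\alpha/2$, the announced bounds $1/L<\theta(X)<2/\alpha$ follow.

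Next I define $P(X):=\Phi^{t_0+\theta(X)}(X)-\mathbb{1}$ and verify that $P(X)\in\Sigma$. Setting $T:=t_0+\theta(X)$ so that $\mu_X(T)=1$, the synchronization estimate of Theorem \ref{Chap2proposition:dispersionCurve} together with the $1$-periodicity of $\Delta_{\mathpzc{r}}$ gives
\[
|\Phi^T_i(X)-1|=|\Phi^T_i(X)-\mu_X(T)|<\Delta_{\mathpzc{r}}(\mu_X(T))=\Delta_{\mathpzc{r}}(1)=\Delta_{\mathpzc{r}}(0),
\]
which is exactly $P(X)\in\Sigma$.

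For the $C^1$ regularity, the augmented pair $(\Phi^t(X),\mu_X(t))$ satisfies an ODE with $C^1$ right-hand side (because $F$ is $C^2$ and $H$ is $C^1$), so $(t,X)\mapsto\mu_X(t)$ is jointly $C^1$. Since $\partial_t\mu_X=\dot{\mu}_X>0$, the implicit function theorem applied to the defining relation $\mu_X(t_0+\theta(X))=1$ shows that $\theta$ is $C^1$ on $\Sigma$, and consequently $P$ is $C^1$ as a composition of $C^1$ maps. The only truly delicate point is the bookkeeping of constants: $D_*$ must be shrunk just enough to simultaneously preserve the invariance property of Proposition \ref{proposition:synchintervalemaximal} and to guarantee $\theta<2/\alpha$; beyond these adjustments, no new analytic input beyond the Gronwall-type estimates already established in Section \ref{Chap2synch} is required.
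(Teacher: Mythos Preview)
Your proof is correct and follows essentially the same route as the paper: parameterize the return by the scalar solution $\mu_X$ of \eqref{Chap2position} with $\mu_X(t_0)=0$, set $\theta(X)=\mu_X^{-1}(1)-t_0$, derive the bounds on $\theta$ from the two-sided estimate on $\dot\mu_X$, and use the $1$-periodicity of $\Delta_{\mathpzc{r}}$ to land $P(X)=\Phi^{t_0+\theta(X)}(X)-\mathbb{1}$ back in $\Sigma$. Two minor remarks: the paper observes (Remark~\ref{Chap2remark:radius}, with $\eta_3=\alpha/L$) that $D_*=D_{\eta,\Lambda}<\alpha/(2L)$ already holds, so your shrinking of $D_*$ is in fact automatic; and your justification of the \emph{strict} inequality $\|F\|_B<L$ via $(H_*)\Rightarrow\|dF\|_B>0$, together with the explicit implicit-function argument for the $C^1$ regularity of $\theta$ and $P$, are details the paper leaves implicit.
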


\begin{proof}
Let  $X \in \Sigma \subset C_{\mathpzc{r}}$. Let  $\mu_X(t)$ be the solution of the system \eqref{Chap2position} associated to $X(t)$ of initial condition $\mu_X(t_0)=0$. Let $\tau_X$ be the inverse function of the function $\mu_X: =\mu_X(t)$ . By the proposition  \ref{Chap2prop2} and the theorem \ref{Chap2proposition:dispersionCurve}  we obtain
\[
\alpha- LD< \dot \mu_X(t) < L.
\]
Remark \eqref{Chap2remark:radius} in the Section \ref{Chap2dispersion_curve} shows that for all  $D<\frac{\alpha}{2L}$ we have
\[
\frac{\alpha}{2} < \dot \mu_X(t) < L.
\]
Let  $\theta(X) := \tau_X(1)-t_0$. Then  $\int_{t_0}^{\tau_X(1)} \dot \mu_X(t) dt = 1$ which implies the second estimation of lemma. Recall that $\max_{1 \le i \le N}|\Phi_{i}^{t}(X)-\mu_X(t)| < \Delta(\mu_X(t))<D$  for all $t \geq t_0$. Put $P(X) := \Phi^{t_0+\theta(X)}(X) - \mathbb{1}$, $P=(P_1,\ldots,P_N)$. Since $ \mu_X(t_0+\theta(X))= 1$ then
\[
\max_{1\le i\le N}|P_{i}(X)|=\max_{1\le i\le N}|\Phi_{i}^{t_0+\theta(X)}(X)-1| < \Delta(1)=\Delta(0).
\]
We have shown that $P$ is a map from $\Sigma$ into itself.
\end{proof}

\begin{corollary}\label{Chapfixedpoint}
The Poincar\'e map $P$ defined in lemma  \ref{ChapPoincare} admits a fixed point $X_* \in \Sigma$.
\end{corollary}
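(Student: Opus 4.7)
The plan is to apply Brouwer's fixed point theorem. Since $\Sigma = \{X \in \mathbb{R}^N : \max_i |x_i| < \Delta_{\mathpzc{r}}(0)\}$ is an open sup-norm ball in $\mathbb{R}^N$, its closure $\bar{\Sigma}$ is compact, convex, and homeomorphic to a closed Euclidean ball --- the standard setting for Brouwer.

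First I would extend $P$ continuously to $\bar{\Sigma}$. By Lemma \ref{ChapPoincare} the return time is uniformly bounded, $1/L < \theta(X) < 2/\alpha$, so $P(X) = \Phi^{t_0+\theta(X)}(X) - \mathbb{1}$ involves only the flow of \eqref{SystemGeneralMeanField} over a bounded time window. The vector field is $C^1$ and bounded on configurations of dispersion at most $1$ (by hypothesis $(H)$ and $\|H\|_B < \mathpzc{r}$), which for small $D$ covers a full neighbourhood of $\bar{\Sigma}$; Cauchy--Lipschitz theory then extends the flow, and hence $\theta$ and $P$, continuously to $\bar{\Sigma}$. The strict inclusion $P(\Sigma) \subset \Sigma$ from the lemma passes in the limit to $P(\bar{\Sigma}) \subset \bar{\Sigma}$.

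To guarantee that the Brouwer fixed point lies inside the open set $\Sigma$ rather than on its boundary, I would exploit the parameter slack. Formula \eqref{radius} exhibits $\mathpzc{r}(D)$ as a continuous, quadratic-in-$D$ function vanishing at $D=0$ and at some $D_{\max}$, so for the given $H$ with $\|H\|_B < \mathpzc{r}(D)$ there is an open interval of admissible $D$'s and one may pick $D' < D$ slightly smaller still satisfying $\|H\|_B < \mathpzc{r}(D')$. The associated closed ball $\bar{\Sigma}' = \{X : \max_i|x_i| \leq D'\}$ is strictly contained in $\Sigma$, and because the ODE is unchanged, Lemma \ref{ChapPoincare} applied with the smaller parameter $D'$ still yields a self-map $P : \Sigma' \to \Sigma'$ that coincides with the original $P$; extending continuously to $\bar{\Sigma}'$, Brouwer delivers a fixed point $X_* \in \bar{\Sigma}' \subset \Sigma$.

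The main obstacle is precisely this boundary subtlety --- Brouwer naturally produces only $X_* \in \bar{\Sigma}$, while the statement demands $X_* \in \Sigma$ --- and the cleanest resolution is the shrinkage argument above, enabled by the explicit dependence \eqref{radius} of $\mathpzc{r}$ on $D$. Verifying in detail that the extension of the flow to $\bar{\Sigma}'$ and the estimates of Theorem \ref{Chap2proposition:dispersionCurve} go through unchanged at the smaller parameter is routine but is where the real bookkeeping sits.
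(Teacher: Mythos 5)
Your proposal is correct, but the way you dispose of the boundary issue is genuinely different from the paper's. Both arguments start identically: Brouwer on a compact convex set. The paper applies Brouwer directly to $\bar\Sigma$ and then rules out a boundary fixed point dynamically: if $X_*\in\partial\bar\Sigma$, the strict differential inequality of Proposition \ref{prop1} forces any coordinate touching the dispersion curve to drop strictly below it immediately after $t_0$, invariance (Theorem \ref{Chap2proposition:dispersionCurve}) keeps the orbit strictly inside the tube thereafter, and this contradicts the return $\Phi^{t_0+\theta(X_*)}(X_*)=X_*+\mathbb{1}$ landing back on the boundary at $\mu_{X_*}=1$. You instead exploit the parameter slack: since $\|H\|_B<\mathpzc{r}(D)$ is strict and $\mathpzc{r}(\cdot)$ in \eqref{radius} is continuous, some $D'<D$ still admits $H$, and because the normalization \eqref{equation:figure} makes $\Delta_{\mathpzc{r}(D),D}$ proportional to $D$, the corresponding $\bar\Sigma'$ is a compact subset of $\Sigma$ on which $P$ (which depends only on the flow and on $\mu_X$, not on $D$) is already defined, continuous, and self-mapping by Lemma \ref{ChapPoincare} at parameter $D'$; Brouwer then gives $X_*\in\bar\Sigma'\subset\Sigma$. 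Your route buys something real: it sidesteps the continuous extension of $P$ to $\bar\Sigma$ (which the paper asserts without comment, even though $\bar\Sigma$ need not lie in $C_{\mathpzc{r}}$ where the flow is controlled) and replaces the somewhat delicate strict-inequality argument at a touching point by a purely topological inclusion. The paper's route, in exchange, needs no second choice of parameters and shows the slightly stronger fact that no periodic point can sit on $\partial\Sigma$. Two small corrections to your write-up: $\bar\Sigma'$ should be $\{X:\max_i|x_i|\le\Delta_{\mathpzc{r}(D')}(0)\}$, not $\{X:\max_i|x_i|\le D'\}$; and once you shrink, your first paragraph's extension of $P$ to all of $\bar\Sigma$ is no longer needed, since $\bar\Sigma'$ already lies in the open set where $P$ is $C^1$.
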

\begin{proof}
 $\bar \Sigma$ is compact and convex; $P : \bar\Sigma\to \bar\Sigma$ is continuous. By Brouwer fixed point theorem, $P$ admits a fixed point in $X_* \in \bar\Sigma$. We claim  that $X_* \not\in \partial \bar\Sigma$.  Suppose by contradiction $X_* \in \partial \bar\Sigma$.  There exists $1 \leq i_0 \leq N$, such that $|x_{i_{0},*}|=\Delta_{\mathpzc{r}}(0)$; 
Put
\[X_*(t)=\Phi^t(X_*),\quad X_*(t)=(x_{1,*}(t),\ldots,x_{N,*}(t)).\]
Let $\mu_{X_{*}}(t)$ the solution of the system \eqref{Chap2position}  associated to the solution $X_*(t)$ of initial condition $\mu_{X_{*}}(t_{0})=0$. We note
\[\delta_{i,1}(X_*(t))=x_{i,*}(t)-\mu_{X_{*}}(t),\quad\text{and}\quad\delta_{i,2}(X_*(t))=\mu_{X_{*}}(t)-x_{i,*}(t).\]
There exists $1 \le i \le N$, $k\in\{1,2\}$, and $t'>t_0$ close to  $t_0$ such that $\delta_{i_0,k}(X_*(t)) < \Delta_{\mathpzc{r}}(\mu_{X_{*}}(t))$ for all $t'>t>t_0$. By repeating this argument for every  $1 \leq i_1 \leq N$ and $k\in\{1,2\}$  satisfying the equality $\delta_{i_1,k}(X_*(t))=\Delta_{\mathpzc{r}}(\mu_{X_{*}}(t))$ we obtain for some $t^*>t_0$, $\delta(X_*(t^*)) < \Delta_{\mathpzc{r}}(\mu_{X_{*}}(t^*))$. But \ref{Chap2proposition:dispersionCurve} implies that   $\max_{1\le i\le N}|x_{i,*}(t)-\mu_{X_{*}}(t)|< \Delta_{\mathpzc{r}}(\mu_{X_{*}}(t))$ for all $t > t^*$. We have obtained a contradiction with the fact that
\begin{align*}
\max_{1\le i\le N}|\Phi_i^{t_0+\theta(X_*)}(X_*)-\mu_{X_{*}}(t_0+\theta(X_*))| &=\max_{1\le i\le N}|x_{i,*}+1-1| \\
&= \Delta_{\mathpzc{r}}(0)= \Delta_{\mathpzc{r}}(\mu_{X_{*}}(t_0+\theta(X_*))),
\end{align*}
knowing that  $\mu_{X_{*}}(t_0+\theta(X_*))=1$ and $\Phi^{t_0+\theta(X_*)}(X_*)=X_*+\mathbb{1}$.
\end{proof}
The main result  {\bf{II}}   is a consequence of the  previous corollary.  
\begin{proof}[Proof of the main result {\bf{II}}]
Corollary \ref{Chapfixedpoint} implies the existence of a fixed point $X_* \in C_{\mathpzc{r}}$ and return time $\theta_*>0$ such that
\[
\Phi^{t_{0}+\theta_*}(X_*) = X_* + \mathbb{1}.
\]
Thanks to periodicity and uniqueness of solution of differential equation, we obtain
\[
\Phi^{\theta_*+t}(X_*) = \Phi^t(X_*)+\mathbb{1}, \quad \forall t \geq t_0.
\]
Let $\Psi :\mathbb{R}^N\to\mathbb{R}^N$ the function defined by
\[
\Psi(s) := \Phi^s(X_*) - \frac{s}{\theta_*}\mathbb{1}=(\Psi_{1}(s),\cdots,\Psi_{N}(s)), \quad \forall s\geq t_0.
\] 
The theorem is proved if we show that $\Psi_i$ are $\theta_*$-periodic. We have
\begin{align*}
\Psi(s+\theta_*) = \Phi^{s + \theta_*}(X_*) -  \frac{s+\theta_*}{\theta_*}\mathbb{1} = \Phi^s(X_*) + \mathbb{1} - \frac{s+\theta_*}{\theta_*}\mathbb{1} = \Psi(s).
\end{align*}
Lemma \ref{ChapPoincare} implies that $\theta_*$  is uniformly bounded.
\end{proof}

\section{Conclusion}
We have generalized the result obtained in \cite{OukilKessiThieullen} to a class of abstract mean field models. We have proved the existence of solution and the existence of the synchronized solution under small perturbation. In addition,  we have proved the existence of periodic locked state for a periodic systems.


\end{document}